\documentclass[reqno,a4paper,12pt]{amsart}

\usepackage{amsmath, amsthm, amssymb}
\usepackage[hidelinks]{hyperref}
\usepackage{enumerate}
\usepackage{url}

\usepackage{bm}

\usepackage{xcolor}

\usepackage[centering, margin={1in, 0.5in}, includeheadfoot]{geometry}

\usepackage{fancyvrb}

\newtheorem{theorem}{Theorem}
\newtheorem{lemma}[theorem]{Lemma}

\theoremstyle{definition}
\newtheorem*{defn}{Definition}

\theoremstyle{remark}

\newtheorem*{remark*}{Remark}

\def\d{{\,{\rm d}}}

\def \ba {\mathbf a}

\def \N {\mathbb{N}}

\def \R {\mathbb{R}}

\def \Z {\mathbb{Z}}

\def\eps{\varepsilon}
\def\supp{\mathrm{supp}}

\title{Sets of nice recurrence are partition regular}
\author{Jonathan Chapman}
\address{Mathematics Institute, Zeeman Building, University of Warwick, Coventry CV4 7AL, United Kingdom}
\email{Jonathan.Chapman@warwick.ac.uk}

\subjclass[2020]{37A05 (primary); 05D10, 28D05, 37A45 (secondary)} 
\keywords{Sets of nice recurrence, partition regularity}

\begin{document}

\begin{abstract}
A set of positive integers $R$ is called a set of nice recurrence if for any measure preserving system $(X,\mu,T)$, for all measurable $A\subseteq X$, and each $\varepsilon>0$, there exists $n\in R$ such that $\mu(A\cap T^{-n}A) \geqslant \mu(A)^2 - \varepsilon$. Answering a long-standing question of Bergelson, we show that sets of nice recurrence have the following Ramsey property: any finite colouring of a set of nice recurrence admits a monochromatic set of nice recurrence.
\end{abstract}

\maketitle

\section{Introduction}

A \emph{measure preserving system} $(X,\mu,T)$ consists of a probability measure space $(X,\mu)$ and a measurable function $T:X\to X$ satisfying $\mu(T^{-1}A) = \mu(A)$ for all measurable $A\subseteq X$. A set of positive integers $R$ is called a \emph{set of recurrence} if for all measure preserving systems $(X,\mu,T)$, each $\eps>0$, and all measurable $A\subseteq X$ with $\mu(A)>0$, there exists $n\in R$ such that $\mu(A\cap T^{-n}A)>0$. The fact that the set of all positive integers $\N$ is a set of recurrence, and hence that sets of recurrence exist, is an immediate consequence of Poincar\'e's recurrence theorem.

Sets of recurrence were introduced by Furstenberg \cite{Fur1981} as part of his seminal work on applying dynamical methods to combinatorics and number theory. Furstenberg's correspondence principle \cite[Theorem 1.1]{Fur1977}, which is the central tool of Ergodic Ramsey Theory, allows one to establish results on arithmetic configurations in sets with positive upper density from recurrence properties of positive measure sets in measure preserving systems. Here, the \emph{upper density} of a set of positive integers $S$ is
\begin{equation*}
    \overline{d}(S) :=\limsup_{N\to\infty}\frac{|S\cap\{1,\ldots,N\}|}{N}.
\end{equation*}
For example, Furstenberg \cite[Proposition 1.3]{Fur1977} showed that the squares are a set of recurrence, and combined this with his correspondence principle to show that any set of positive integers with positive upper density contains two numbers which differ by a square.

Over the subsequent decades, there has been substantial research into classifying various notions of recurrence and establishing further combinatorial consequences \cite{KMF1978,Ber1986,For1990,McC1995,FLW2006,BL2008,Gri2019,Ack2022,DLMS2023,FS2025,Rod2025,Zel2026}. In this paper, we will be concerned with a strong notion of recurrence introduced by Bergelson \cite[Definition 2.2]{Ber1986} called \emph{nice recurrence} (also known as \emph{optimal recurrence}).
\begin{defn}[Nice recurrence]
    A set of positive integers $R$ is called a \emph{set of nice recurrence} if for all measure preserving systems $(X,\mu,T)$, all measurable $A\subseteq X$, and all $\eps>0$, there exists $n\in R$ such that
    \begin{equation*}
        \mu(A\cap T^{-n}A) \geqslant \mu(A)^2 - \eps.
    \end{equation*}
\end{defn}

In the same way that Poincar\'e's recurrence theorem implies $\N$ is a set of recurrence, Khintchine's recurrence theorem shows that $\N$ is a set of nice recurrence. More generally, Bergelson observed that sets of positive integers $S$ with upper density $1$ have the following \emph{Ramsey property}: for any finite colouring $S=C_1\cup\cdots\cup C_r$, one of the colour classes $C_i$ must be a set of nice recurrence. Combining this with Furstenberg's correspondence principle, Bergelson \cite[Theorem 1.1]{Ber1986} showed that if the positive integers are finitely coloured $\N=C_1\cup\cdots\cup C_r$, then there exists a colour class $C_i$ with $\overline{d}(C_i)>0$ such that $\overline{d}(\{n\in C_i:\overline{d}(C_i\cap (C_i -n)) \geqslant \overline{d}(C_i)^2 - \eps\}) > 0$ holds for all $\eps>0$. This is a substantial strengthening of a classical theorem of Schur, which only guarantees that there exists a colour class $C_i$ with $C_i\cap (C_i - C_i)\neq\emptyset$.

Motivated by this argument, Bergelson has frequently asked \cite{Ber1986,Ber1996,For1990,BL2008,Kra2024} whether all sets of nice recurrence have the Ramsey property, meaning that if $R=R_1\cup\cdots\cup R_r$ is a set of nice recurrence, then one of the $R_i$ must also be a set of nice recurrence. In combinatorial terminology, this asks whether the family of sets of nice recurrence is \emph{partition regular}. The purpose of this note is to provide a positive answer to this question.

\begin{theorem}[Sets of nice recurrence are partition regular]\label{thm1}
    Let $R$ be a set of positive integers. If $R$ is a set of nice recurrence and $R=R_1\cup\cdots \cup R_r$, then there exists $1\leqslant i\leqslant r$ such that $R_i$ is a set of nice recurrence.
\end{theorem}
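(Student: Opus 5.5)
The plan is to prove the contrapositive by passing to a spectral reformulation of nice recurrence, combining the witnesses of failure there, and then returning to sets.

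\emph{Contrapositive.} Suppose no $R_i$ is a set of nice recurrence. Then for each $i$ there are a system $(X_i,\mu_i,T_i)$, a set $A_i\subseteq X_i$ and $\eps_i>0$ with
\begin{equation*}
\mu_i(A_i\cap T_i^{-n}A_i)<\mu_i(A_i)^2-\eps_i \quad\text{for all } n\in R_i .
\end{equation*}
I want to build one system $(X,\mu,T)$, one set $A$ and one $\eps>0$ with $\mu(A\cap T^{-n}A)<\mu(A)^2-\eps$ for every $n\in R$. That contradicts the hypothesis on $R$.

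\emph{Spectral reformulation.} Put $f_i=1_{A_i}-\mu_i(A_i)$. Then
\begin{equation*}
\mu_i(A_i\cap T_i^{-n}A_i)-\mu_i(A_i)^2=\langle T_i^nf_i,f_i\rangle=\hat\sigma_i(n),
\end{equation*}
where $\sigma_i$ is the spectral measure of $f_i$, a finite positive measure on the circle. So the failure of $R_i$ says $\hat\sigma_i(n)<-\eps_i$ on $R_i$. I would first prove an intermediate lemma characterising nice recurrence spectrally.

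\textbf{Lemma.} $R$ fails to be a set of nice recurrence if and only if there is a positive definite sequence $\phi$ and $\delta>0$ such that $\phi$ is realised as $\langle T^n f,f\rangle$ for a \emph{real bounded mean-zero} function $f$ in some system, and $\operatorname{Re}\phi(n)\leqslant-\delta$ for all $n\in R$.

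The direction from sets to functions is the computation above. For the converse, from a bounded real $f$ one recovers a set. I would do this by a standard skew-product or "graph" construction: take $A=\{(x,t):t<g(x)\}$ inside $X\times[0,1]$ with $T\times\mathrm{id}$, where $g=(f+c)/C\in[0,1]$. This gives
\begin{equation*}
\mu(A\cap T^{-n}A)-\mu(A)^2=C^{-2}\langle T^nf,f\rangle .
\end{equation*}
So negativity on $R$ transfers back to a set, with a smaller but still positive $\delta$.

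\emph{Combining the witnesses.} This is where the real content lies, and I expect it to be the main obstacle. A plain sum $\sum_i w_i\hat\sigma_i$ fails, because on $R_i$ the terms $\hat\sigma_j(n)$ with $j\neq i$ may be positive and swamp the negative term. Instead I would use products, that is, tensor products of functions in the product system $X_1\times\cdots\times X_r$, whose correlation sequences are products of the $\hat\sigma_i$ and remain positive definite. After normalising each $\phi_i=\hat\sigma_i/\hat\sigma_i(0)$, we have $|\phi_i|\leqslant1$ everywhere and $\operatorname{Re}\phi_i\leqslant-\delta_i$ on $R_i$. I would then look for a polynomial
\begin{equation*}
P(\phi_1,\bar\phi_1,\ldots,\phi_r,\bar\phi_r)
\end{equation*}
with nonnegative coefficients and no constant term, so that it is again the correlation sequence of a mean-zero bounded function, and with $\operatorname{Re}P\leqslant-\delta$ on all of $R$.

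First attempt: replace each $\phi_i$ by a high odd power or a suitable symmetrisation, so that the "uncontrolled" factors become uniformly small relative to the controlled negative term. Natural candidates are Fejér-type averages or products such as $\prod_{j\neq i}(1+\operatorname{Re}\phi_j)/2$, which take values in $[0,1]$ and are positive definite. The aim is a sum like
\begin{equation*}
\sum_i \phi_i\prod_{j\neq i}\psi_j ,
\end{equation*}
in which each summand is at most $0$ everywhere up to a small error, and is at most $-\delta$ on $R_i$.

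If no such algebraic combination exists, the fallback is a compactness and minimax argument. Over the compact convex set of normalised positive definite sequences realised by bounded mean-zero functions, I would apply a Hahn--Banach or von Neumann minimax theorem to
\begin{equation*}
\inf_{\phi}\ \sup_{\text{prob.\ } \lambda \text{ on } R}\ \sum_n \lambda(n)\operatorname{Re}\phi(n).
\end{equation*}
The goal is to show that a uniform-on-$R$ negativity certificate exists once each piece has one. The lemma then converts the combined $\phi$ back into a set $A$ witnessing that $R$ is not a set of nice recurrence.
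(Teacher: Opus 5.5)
Your architecture is the paper's: pass to bounded real mean-zero witnesses, realise polynomials with nonnegative coefficients and no constant term in the correlation sequences via product systems, then return to sets. For that realisation you need a separate block of independent copies $X_1^{i}\times X_2^{j}$ for each monomial, with $f=\sum a_{i,j}^{1/2}f_{i,j}$; the single product $X_1\times\cdots\times X_r$ is not enough. Cross terms then vanish because each block has mean zero. Two steps, however, fail as written.

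\emph{Returning to sets.} With $T\times\mathrm{id}$ on $X\times[0,1]$, the threshold $t$ is the same at times $0$ and $n$. So $\mu(A\cap T^{-n}A)=\int_X\min(g,T^ng)\,\mathrm{d}\mu$, not $\int_X g\cdot T^ng\,\mathrm{d}\mu$, and your identity is false. On $\Z/4$ with the rotation, take $f=(1,\tfrac12,-\tfrac12,-1)$ and $g=(1+f)/2$. Then $\langle Tf,f\rangle=-\tfrac1{16}$, but $\mu(A\cap T^{-1}A)-\mu(A)^2=\tfrac14-\tfrac14=0$. In general, $\mu(A\cap T^{-n}A)=\int_0^1\mu(B_t\cap T^{-n}B_t)\,\mathrm{d}t$ with $B_t=\{g>t\}$, while $\mu(A)^2\leqslant\int_0^1\mu(B_t)^2\,\mathrm{d}t$. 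So this set can only witness failure when some level set $B_t$ already does, and it loses the cancellation between level sets that a general $f$ provides. The fix is the paper's Bernoulli extension. Use $[0,1]^{\Z}$ with the shift and set $A=\{(\ba,x):a_0<g(x)\}$. The thresholds $a_0$ and $a_n$ are then independent, which gives exactly $\int g\cdot T^ng\,\mathrm{d}\mu$.

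\emph{Combining the witnesses.} This is the whole content of the theorem, and you leave it open. Your concrete candidate fails. For $r=2$, $\phi_1\frac{1+\phi_2}{2}+\phi_2\frac{1+\phi_1}{2}$ corresponds to $P(u,v)=\frac{u+v}{2}+uv$, and $P(-\eps,1)=\frac{1-3\eps}{2}>0$. Odd powers only shrink the negative term. You also cannot make each summand nearly nonpositive, since $\phi_i$ is uncontrolled off $R_i$.

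The missing idea is a \emph{high} power in the cofactor, balanced via Bernoulli's inequality. The paper takes $P(U,V)=U(1+V)^d+V(1+U)^d+\frac{\eps}{2}UV(U+V)$ with $(1-\eps)^d<d\eps$.
\begin{itemize}
\item If $u\leqslant-\eps<0<v$: we have $u(1+v)^d\leqslant-\eps(1+dv)$ and $v(1+u)^d\leqslant v(1-\eps)^d$, and the cubic term is at most $\eps$. Hence $P(u,v)\leqslant v\bigl((1-\eps)^d-d\eps\bigr)<0$; the negative term grows like $d\eps v$ and beats the positive one.
\item If $v\leqslant 0$: all three terms are nonpositive and not all zero. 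The cubic term covers $(u,v)=(-1,-1)$, where the first two vanish.
\end{itemize}
Compactness then gives a uniform $\delta$.

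Your minimax fallback does not replace this. Even granting the swap, whose compactness hypotheses you have not checked since $R$ is infinite, you still need, for each probability $\lambda$ on $R_1\cup R_2$, a realisable $\phi$ with $\sum_n\lambda(n)\phi(n)\leqslant-\delta$. When $\lambda$ charges both pieces, that is the same combination problem.
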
 

\subsection*{Notation and terminology}

A typical measure preserving system will be denoted by $(X,\mu,T)$ or $(Y,\nu,S)$, often with additional subscripts. For a non-negative integer $n$, we let $T^n$ denote the $n$-fold composition of $T$. Explicitly, we write $T^0(x):=x$ and $T^{n+1}(x):=T(T^n(x))$ for all $n\geqslant 0$. Following convention, given a function $f$ defined on $X$, we write $Tf := f\circ T$. We use the preimage notation $T^{-n}A:=(T^n)^{-1}(A)=\{x\in X: T^n(x)\in A\}$. We also let $\cdot$ denote pointwise multiplication of functions. For example, the expression
\begin{equation*}
    \int_X f\cdot T^nf\d\mu
\end{equation*}
is the integral of the function $(f\cdot T^nf)(x):=f(x)f(T^n(x))$.

The product $(X\times Y, \mu\otimes\nu)$ of two probability measure spaces $(X,\mu)$ and $(Y,\nu)$ is the probability space whose $\sigma$-algebra of measurable sets is generated by all sets of the form $A\times B$ with $A\subseteq X$ and $B\subseteq Y$ measurable and whose measure $\mu\otimes\nu$ satisfies $\mu\otimes \nu(A\times B) = \mu(A)\nu(B)$ (see \cite[\S1.7]{Tao2011} for further details). The corresponding product $(X\times Y, \mu\otimes\nu, T\times S)$ of the measure preserving systems $(X,\mu,T)$ and $(Y,\nu,S)$ additionally has the measure preserving map $(T\times S)(x,y):= (T(x),S(y))$. The product of $i$ copies of $(X,\mu,T)$ and $j$ copies of $(Y,\nu,S)$ is written as $(X^i\times Y^j,\mu^{\otimes i}\otimes\nu^{\otimes j}, T^{\times i}\times S^{\times j})$, where $T^{\times i}:X^i\to X^i$ is the map $T^{\times i}(x_1,\ldots,x_i):= (T(x_1),\ldots,T(x_i))$, and similarly for $S^{\times j}$. If $i=0$, then this product system is understood to be $(Y^j,\nu^{\otimes j},S^{\times j})$, and similarly when $j=0$. We never consider the case where $i=j=0$.


\subsection*{Acknowledgements} We thank Joel Moreira and Rigoberto Zelada for helpful discussions. JC is supported by EPSRC through Joel Moreira's Frontier Research Guarantee grant, ref. \texttt{EP/Y014030/1}.

\subsection*{Rights}

For the purpose of open access, the author has applied a Creative Commons Attribution (CC-BY) licence to any Author Accepted Manuscript version arising from this submission.

\section{Proof of Theorem \ref{thm1}}

We begin with a functional reformulation of nice recurrence. Rather than working only with sets with positive measure, this allows us to use arbitrary measurable `balanced' functions, i.e. bounded functions with mean zero. A similar functional reduction via `Bernoulli extensions' was used in \cite[Proof of Theorem 10.1]{BTZ2015} to investigate multiple recurrence for ergodic $\mathbb{F}_p^\omega$-actions.

\begin{lemma}\label{lem1}
    A set of positive integers $R$ is not a set of nice recurrence if and only if there exists a measure preserving system $(X,\mu,T)$ and a bounded measurable function $f=f_R:X\to\R$ such that
    \begin{equation}\label{eqn1}
        \int_X f \d\mu = 0, \qquad\text{and}\qquad \sup_{n\in R}\int_X f\cdot T^nf\d\mu < 0.\tag{$\dagger$}
    \end{equation}
\end{lemma}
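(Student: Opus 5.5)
We prove the two directions separately. The forward direction is a direct computation with $f = 1_A - \mu(A)$. The reverse direction needs a construction that turns a balanced function into a set.

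\emph{Forward direction.} Suppose $R$ is not a set of nice recurrence. Then there exist $(X,\mu,T)$, a measurable $A\subseteq X$ and $\eps>0$ with $\mu(A\cap T^{-n}A)<\mu(A)^2-\eps$ for all $n\in R$. Set $f := 1_A - \mu(A)$, which is bounded and has mean zero. Using invariance of $\mu$, we get
\begin{equation*}
\int_X f\cdot T^nf\d\mu = \mu(A\cap T^{-n}A) - \mu(A)^2 < -\eps
\end{equation*}
for every $n\in R$. Hence the supremum over $R$ is at most $-\eps<0$, and \eqref{eqn1} holds.

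\emph{Reverse direction.} Let $f$ satisfy \eqref{eqn1}, with $\sup_{n\in R}\int_X f\cdot T^nf\d\mu = -\delta<0$. After rescaling by a positive constant we may assume $|f|\leqslant 1/2$; this preserves \eqref{eqn1}, with a new $\delta>0$. We must produce a system and a set violating nice recurrence, and a single level set of $f$ does not obviously work. Instead we randomise: set $g := 1/2 + f$, which takes values in $[0,1]$. Form the extension $Y := X\times[0,1]^{\Z}$, or $[0,1]^{\N}$ when $T$ is not invertible, with measure $\nu := \mu\otimes \lambda^{\otimes \N}$ and map $S(x,(t_k)_k) := (Tx,(t_{k+1})_k)$, the product of $T$ with the Bernoulli shift. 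Define
\begin{equation*}
A := \{(x,t) : t_0 < g(x)\}.
\end{equation*}
This makes $S$ measure preserving and $A$ measurable, with $\nu(A)=\int g\d\mu = 1/2$. For $n\geqslant 1$ the coordinates $t_0$ and $t_n$ are independent uniform variables, so Fubini gives
\begin{equation*}
\nu(A\cap S^{-n}A) = \int_X g\cdot T^ng\d\mu = \tfrac14 + \int_X f\cdot T^n f\d\mu \leqslant \nu(A)^2 - \delta \quad\text{for all } n\in R.
\end{equation*}
Taking $\eps = \delta/2$ then shows that $R$ is not a set of nice recurrence.

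\emph{Main obstacle.} The key point is exactly this linearisation. Independence across different times is what makes $\nu(A\cap S^{-n}A)$ equal the bilinear expression $\int g\cdot T^ng$, and this uses crucially that $n\in R$ is positive, so that $t_0$ and $t_n$ are distinct coordinates. If one wanted to avoid the infinite product, the fallback would be a layer-cake decomposition of $g$ into indicator functions of its level sets. That only yields an average over levels of the quantities $\mu(A_s\cap T^{-n}A_u)$, with mixed level sets $A_s\neq A_u$, which is why the Bernoulli extension is preferable. The remaining verifications, namely measurability and measure preservation of the skew product $S$, are routine.
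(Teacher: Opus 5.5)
Your proof is correct and follows essentially the same route as the paper. The forward direction uses $f=1_A-\mu(A)$, and the reverse direction uses a Bernoulli extension with a threshold set, so that $\nu(A\cap S^{-n}A)$ becomes the bilinear quantity $\int_X g\cdot T^n g\d\mu$. The only difference is an affine normalisation: the paper takes uniform coordinates on $[-1,1]$ and the threshold $a_0\leqslant f(x)$, while you shift to $g=\tfrac12+f$ with coordinates in $[0,1]$.
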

\begin{proof}
    We start with the ``only if'' direction. If $R$ is not a set of nice recurrence, then there is a measure preserving system $(X,\mu,T)$, some $\eps > 0$, and a measurable set $A\subseteq X$ with $\sup_{n\in R}\mu(A\cap T^{-n} A) \leqslant \mu(A)^2 - \eps$. Choosing $f = 1_A - \mu(A)$, we see that $f$ is a bounded mean zero function and, for all $n\in R$, satisfies
    \begin{align*}
        \int_X f\cdot T^nf \d\mu &= \int_X 1_A\cdot T^n 1_A \d\mu - \mu(A)\left( \int_X 1_A\d\mu + \int_X T^n1_A\d\mu - \mu(A)\right)\\
        &= \mu(A\cap T^{-n}A) - \mu(A)^2 \leqslant -\eps < 0.
    \end{align*}

    Now we consider the ``if'' direction. Suppose that $(X,\mu,T)$ and $f:X\to\R$ are as in the statement of the lemma. By dividing $f$ by a sufficiently large positive constant, we may assume that $f$ takes values in $[-1,1]$. Set
    \begin{equation*}
        \eps = -\frac{1}{4}\sup_{m\in R}\int_X f\cdot T^m f\d\mu > 0.
    \end{equation*}
    Now let $(Y,\nu,S)$ be the product of the measure preserving systems $(\Omega,\lambda^{\otimes\Z},\sigma)$ and $(X,\mu,T)$, where $\Omega = [-1,1]^{\Z}$ has the product topology and Borel $\sigma$-algebra, $\lambda$ is the normalised Lebesgue measure, and $\sigma$ is the shift map $(\sigma(\ba))_n = a_{n+1}$. Let
    \begin{equation*}
       A = \{(\ba,x)\in\Omega\times X: a_0\leqslant f(x)\}.
    \end{equation*}
        Since $f$ takes values in $[-1,1]$ and has mean zero, an application of the Fubini-Tonelli theorem (see \cite[Theorem 1.7.15]{Tao2011}) gives
    \begin{equation*}
        \nu(A) = \int_X \lambda^{\otimes\Z}(\{\ba\in\Omega:a_0\leqslant f(x)\})\d\mu(x) = \frac{1}{2}\int_X(1+f)\d\mu = \frac{1}{2}.
    \end{equation*}
    Similarly, upon noting that $T^{-n}(A) = \{(\ba,x): a_n\leqslant f(T^n x)\}$, for each $n\in\N$ we have
    \begin{align*}
        4\nu(A\cap S^{-n}A) &= 4\int_X \lambda^{\otimes\Z}(\{\ba\in\Omega:a_0\leqslant f(x),\; a_n\leqslant f(T^n x)\})\d\mu(x)\\
        &= 1 + \int_X f\cdot T^n f\d\mu + \int_X f \d\mu + \int_X T^n f\d \mu\\
        &= 1 + \int_X f\cdot T^n f \d\mu.
    \end{align*}
    Thus, if $n\in R$, then
    \begin{equation*}
        \nu(A\cap S^{-n}(A)) \leqslant \frac{1}{4}\left(1 + \sup_{m\in R}\int_X f\cdot T^m f\d\mu \right)= \nu(A)^2 - \eps,
    \end{equation*}
    which shows that $R$ is not a set of nice recurrence.
\end{proof}

\begin{remark*}
As in \cite{BTZ2015}, Lemma 2 can readily be adapted to characterise sets of nice recurrence for actions of arbitrary abelian groups $G$ by replacing $\Omega$ with  $[-1,1]^G$. Moreover, although we will not make use of this observation, by taking $\Omega=[\frac{-\alpha}{1-\alpha},1]^\Z$ and scaling $f$ so that $\lVert f\rVert_\infty\leqslant \min\{1,\frac{\alpha}{1-\alpha}\}$, the proof of Lemma \ref{lem1} also shows that if $R$ is not a set of nice recurrence, then for any $0<\alpha<1$ one can find a measure preserving system $(X,\mu,T)$, some $\eps>0$, and $A\subseteq X$ with $\mu(A)=\alpha $ such that $\mu(A\cap T^{-n}A)<\mu(A)^2 - \eps$ for all $n\in R$. For $\alpha = 1/2$, this latter fact - alongside a similar functional characterisation of nice recurrence for countable discrete abelian groups - was recently established by Zelada \cite[Appendix B]{Zel2026} using different, more technical methods.
\end{remark*}

To prove Theorem \ref{thm1}, we first apply Lemma \ref{lem1} to obtain functions $f_i:X_i\to\R$ for $i\in\{1,2\}$ which witness the fact that $R_1,R_2\subseteq\N$ are not sets of nice recurrence. We then use these functions to construct a measure preserving system $(X,\mu,T)$ and a function $f:X\to \R$ which satisfies \eqref{eqn1} for all $n\in R_1\cup R_2$. 
The most straightforward approach one could try to accomplish this would be to take $(X,\mu,T)$ to be the product of the $(X_i,\mu_i,T_i)$ and set $f(x_1,x_2):=f_1(x_1)f_2(x_2)$. We would then have
\begin{equation*}
    \int_X f\cdot T^nf \d\mu = \left(\int_{X_1}f_1\cdot T_1^nf_1 \d\mu_1\right)\left(\int_{X_2}f_2\cdot T_2^nf_2 \d\mu_2\right).
\end{equation*}
This is the standard method which is used to show that sets of recurrence are partition regular; see, for example, \cite[Proposition 1.3]{For1990}. Furthermore, one can adapt this argument to show that a set of nice recurrence cannot be written as a union of two sets which are both not sets of $3$-nice recurrence (see \cite[Proposition 2.3]{For1990}). 
However, this is not enough to prove Theorem \ref{thm1}. For example, even though the integral over $X_1$ on the right-hand side is negative when $n\in R_1$, the integral over $X_2$ may also be negative whenever $n\in R_1$. Returning to the original definition of nice recurrence, this issue corresponds to the possibility that, when $n\in R_1$, the measure $\mu_2(A_2\cap T_2^{-n}A_2)$ is significantly larger than $\mu_2(A_2)^2$, which could then prevent $\mu(A\cap T^{-n}A)=\mu_1(A_1\cap T_1^{-n}A_1)\mu_2(A_2\cap T_2^{-n}A_2)$ from being smaller than $\mu(A)^2 = \mu_1(A_1)^2\mu_2(A_2)^2$.

To overcome this issue, we consider a much larger product system for which $X$ takes the form $X=\prod_{(i,j)\in I} (X^i\times X^j)$, for some finite set of indices $I$, and then construct our function $f:X\to\R$ so that
\begin{equation*}
    \int_X f\cdot T^nf \d\mu = P\left(\int_{X_1}f_1\cdot T_1^nf_1 \d\mu_1,\int_{X_2}f_2\cdot T_2^nf_2 \d\mu_2\right)
\end{equation*}
for some polynomial $P\in\R[U,V]$. If we can find such a polynomial $P$ with the property that $P(u,v)$ is negative and bounded away from zero whenever one of $u$ or $v$ is at most $-\eps$, then this would show that the integral on the left-hand side of the above expression is negative and bounded away from $0$ for all $n\in R_1\cup R_2$, thereby proving Theorem \ref{thm1}. The construction of such a polynomial $P$ is accomplished in the following lemma.

\begin{lemma}\label{lem2}
    For all $0<\eps \leqslant 1$, there exists $\delta>0$ and a polynomial $P\in\R[U,V]$ of the form $P(U,V)=\sum_{i,j\geqslant 0}a_{i,j}U^iV^j$ with $a_{i,j}\geqslant 0$ for all $i,j\geqslant 0$ such that $P(0,0) = a_{0,0} = 0$ and $P(u,v)\leqslant - \delta$ for all $u,v\in[-1,1]$ with $\min\{u,v\}\leqslant -\eps$.
\end{lemma}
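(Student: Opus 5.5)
We will build $P$ explicitly as a sum of three terms with non-negative coefficients, one for each part of the difficulty, and then get $\delta$ from compactness. Fix $0<\eps\leqslant 1$ and consider
\begin{equation*}
    P(U,V) = U(1+V)^N + V(1+U)^N + c\,UV(U+V),
\end{equation*}
where $c=\eps/4$ and $N\in\N$ is large, to be chosen. Expanding $(1+V)^N$ and $(1+U)^N$ binomially, every monomial of $P$ has a non-negative coefficient and total degree at least $1$, so $a_{0,0}=P(0,0)=0$. The set $K=\{(u,v)\in[-1,1]^2:\min\{u,v\}\leqslant-\eps\}$ is compact and $P$ is continuous. It therefore suffices to show that $P<0$ at every point of $K$, and then take $\delta=-\max_K P>0$. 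Since $P$ is symmetric in $U$ and $V$, we may assume $u\leqslant-\eps$, and we split according to the sign of $v$.

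\emph{Case $v\in[-1,0]$.} Here each of the three terms is $\leqslant 0$. The first is $u(1+v)^N$ with $u<0$ and $1+v\geqslant0$. The second is $v(1+u)^N$ with $v\leqslant 0$ and $1+u\geqslant0$. The third is $c\,uv(u+v)$, where $uv\geqslant0$ and $u+v<0$. For strict negativity: if $v\neq-1$ the first term is strictly negative, since $1+v>0$. If $v=-1$ the third term equals $c\,u(1-u)\cdot(-1)\cdot(-1)$; indeed $uv(u+v)=-u(u-1)=u(1-u)$, which is strictly negative because $u<0$. So $P(u,v)<0$ in this case. The cubic term exists precisely to handle the corner $(-1,-1)$, where the first two terms both vanish.

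\emph{Case $v\in(0,1]$.} Here $(1+v)^N\geqslant1$, so the first term is at most $-\eps$. The second term satisfies $0\leqslant v(1+u)^N\leqslant(1-\eps)^N$. The third term is bounded in absolute value by $c\cdot 1\cdot 2=\eps/2$. Choosing $N$ so large that $(1-\eps)^N<\eps/4$, we get $P(u,v)\leqslant-\eps+\eps/4+\eps/2<0$.

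Both cases give $P<0$ on $K$, and compactness supplies $\delta$. The main obstacle is seeing that non-negative coefficients are compatible with negativity at all. It is resolved by making every term carry an odd power of a negative variable, so that the term's sign is forced there. The weight $(1+V)^N$ lets the ``good'' term $U(1+V)^N$ dominate when $v>0$, while the damping factor $(1+U)^N\leqslant(1-\eps)^N$ suppresses the competing term. The only delicate point is making sure the small cubic correction $c\,UV(U+V)$ does not spoil the case $v>0$; this is why $c$ is taken proportional to $\eps$ and $N$ is chosen after $c$.
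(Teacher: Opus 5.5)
Your proof is correct and essentially the same as the paper's: the same polynomial $U(1+V)^N+V(1+U)^N+c\,UV(U+V)$, the same sign-based case split, and the same compactness argument for $\delta$. The only difference is tuning: you take $c=\eps/4$, use the trivial bound $(1+v)^N\geqslant 1$, and choose $N$ with $(1-\eps)^N<\eps/4$. The paper instead takes $c=\eps/2$ and so needs Bernoulli's inequality, with $d$ chosen so that $(1-\eps)^d<d\eps$, to absorb the cubic term.
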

\begin{proof}
    Since $0<\eps\leqslant 1$, we can find $d\in\N$ such that $(1-\eps)^d <  d\eps $. Now set
    \begin{equation*}
        P(U,V) = U(1+V)^d + V(1+U)^d + \frac{\eps}{2}UV(U+V).
    \end{equation*}
It is immediate that $P(0,0) = 0$ and all the coefficients of all the monomials appearing in $P$ are non-negative. Since $P(U,V)=P(V,U)$, it only remains to show that $P(u,v)<0$ for all $u\in[-1,-\eps]$ and $v\in[-1,1]$. Indeed, once this is established, continuity of $P$ and compactness of $[-1,-\eps]\times[-1,1]$ allow us to finish the proof by setting
    \begin{equation*}
         \delta = - \max\{P(u,v):u,v\in[-1,1],\; \min\{u,v\}\leqslant-\eps\} > 0.
    \end{equation*}
    
    If $u\leqslant -\eps$ and $v \leqslant 0$, then, as each of the three terms in the definition of $P$ are non-positive and cannot all vanish, we see that $P(u,v)<0$. If instead $u\leqslant -\eps < 0 < v$, then an application of Bernoulli's inequality and the trivial bound $uv(u+v)\leqslant 2$ gives
    \begin{align*}
        P(u,v)\leqslant -\eps (1+dv) + v(1-\eps )^d + \eps = v ((1-\eps)^d - d\eps).
    \end{align*}
    Our choice of $d$ and the assumption $v>0$ therefore shows that the right-hand side is negative, completing the proof.
\end{proof}

\begin{proof}[Proof of Theorem \ref{thm1}]
    By contraposition and induction on the number of colours, it suffices to show that if $R_1,R_2\subseteq\N$ are not sets of nice recurrence, then neither is $R_1\cup R_2$. Since $R_1$ and $R_2$ are not sets of nice recurrence, for each $i\in\{1,2\}$, Lemma \ref{lem1} supplies us with measure preserving systems $(X_i,\mu_i,T_i)$ and functions $f_i:X_i\to\R$ such that 
    \begin{equation*}
        \int_{X_i} f_i \d\mu_i = 0, \qquad\text{and}\qquad \sup_{m\in R_i}\int_{X_i} f_i\cdot T_i^m f_i \d\mu_i < 0.
    \end{equation*}
    As in the proof of Lemma \ref{lem1}, we may assume each of the $f_i$ take values in $[-1,1]$. Set
    \begin{equation*}
        \eps = -\max_{i\in\{1,2\}}\left(\sup_{m\in R_i}\int_{X_i} f_i\cdot T_i^m f_i \d\mu_i\right),
    \end{equation*}
    and note that $0<\eps\leqslant 1$. Let $P\in\R[U,V]$ and $\delta > 0$ be as provided by Lemma \ref{lem2} with this choice of $\eps$. Writing
    \begin{equation*}
        P(U,V) = \sum_{i,j\geqslant 0}a_{i,j}U^iV^j,
    \end{equation*}
    let $\supp(P) = \{(i,j)\in(\N\cup\{0\})^2 : a_{i,j}\neq 0\}$. For each $(i,j)\in\supp(P)$, define the measure preserving system $(X_{i,j},\mu_{i,j},T_{i,j}) = (X_1^{i}\times X_2^j,\mu_1^{\otimes i}\otimes\mu_2^{\otimes j}, T_{1}^{\times i}\times T_2^{\times j})$, and let $f_{i,j}:X_{i,j}\to\R$ be the function
    \begin{equation*}
        f_{i,j}(x_1^{(1)},\ldots,x_i^{(1)};x_1^{(2)},\ldots,x_j^{(2)}) = f_1(x_1^{(1)})\cdots f_1(x_i^{(1)})f_2(x_1^{(2)})\cdots f_2(x_j^{(2)}).
    \end{equation*}
    Taking the product of all these systems allows us to construct the measure preserving system $(X,\mu,T)$, where
    \begin{equation*}
       X= \prod_{(i,j)\in\supp(P)} (X_1^i\times X_2^j);\quad \mu = \bigotimes_{(i,j)\in\supp(P)} (\mu_1^{\otimes i}\otimes\mu_2^{\otimes j}); \quad T =\prod_{(i,j)\in\supp(P)} (T_1^{\times i}\times T_2^{\times j}).
    \end{equation*}
    Finally, we define the function $f:X\to\R$ by
    \begin{align*}
        f\left( \prod_{(i,j)\in\supp(P)} (x_1^{(1)},\ldots,x_i^{(1)};x_1^{(2)},\ldots,x_j^{(2)})_{(i,j)}\right) \\= \sum_{(i,j)\in\supp(P)}a_{i,j}^{1/2}f_{i,j}\left((x_1^{(1)},\ldots,x_i^{(1)};x_1^{(2)},\ldots,x_j^{(2)})_{(i,j)}\right),
    \end{align*}
    where the $(i,j)$ subscripts on $(x_1^{(1)},\ldots,x_i^{(1)};x_1^{(2)},\ldots,x_j^{(2)})_{(i,j)}$ indicates that the tuple is an element of $X_1^{\times i}\times X_2^{\times j}$ and is independent of the corresponding tuple for $X_1^{\times i'}\times X_2^{\times j'}$ for any $(i',j')\neq (i,j)$.

    To complete the proof, we show that the function $f$ we have constructed satisfies \eqref{eqn1}. Recall from Lemma \ref{lem2} that if $(i,j)\in\supp(P)$, then $\max\{i,j\}\geqslant 1$. Hence, as $f_1$ and $f_2$ satisfy \eqref{eqn1}, for each $(i,j)\in\supp(P)$, we have\footnote{Recall that $i=0$ (respectively $j=0$) corresponds to the integral over $X_1$ (respectively $X_2$) being omitted. Hence, these expressions remain valid if one uses the convention $0^0 = 0$.}
    \begin{equation*}
        \int_{X_{i,j}}f_{i,j}\d\mu_{i,j} = \left(\int_{X_1}f_1 \d\mu_1\right)^i\left(\int_{X_2}f_2 \d\mu_2\right)^j = 0,
    \end{equation*}
    and, for all $n\in\N$,
    \begin{align*}
        \int_{X_{i,j}}f_{i,j}\cdot T_{i,j}^n f_{i,j}\d\mu_{i,j} &= \int_{X_{i,j}}\left(\prod_{a=1}^i f_1(x_a^{(1)})f_1(T_{i,j}^n x_a^{(1)})\right)\left(\prod_{b=1}^j f_2(x_b^{(2)})f_2(T_{i,j}^n x_b^{(2)})\right)\d\mu_{i,j}\\
        &= \left(\int_{X_1}f_1\cdot T_1^nf_1 \d\mu_1 \right)^i \left(\int_{X_2}f_2 \cdot T_2^n f_2 \d\mu_2 \right)^j.
    \end{align*}
    Furthermore, for all $n\in\N$ and $(i,j),(i',j')\in\supp(P)$ with $(i,j)\neq (i',j')$, since $X_{i,j}$ and $X_{i',j'}$ are independent in the product defining $X$, we have
    \begin{equation*}
        \int_{X}f_{i,j}\cdot T_{i',j'}^n f_{i',j'}\d\mu = \left(\int_{X_{i,j}}f_{i,j} \d\mu_{i,j} \right)\left(\int_{X_{i',j'}}f_{i',j'} \d\mu_{i',j'} \right) = 0.
    \end{equation*} 
    We therefore deduce
        \begin{equation*}
        \int_X f\d\mu = \sum_{(i,j)\in\supp(P)}a_{i,j}^{1/2}\left(\int_{X_1}f_1 \d\mu_1\right)^i\left(\int_{X_2}f_2 \d\mu_2\right)^j = 0
    \end{equation*}
    and
    \begin{align*}
    \int_X f\cdot T^n f \d\mu &= \sum_{(i,j)\in\supp(P)}a_{i,j}\int_{X_{i,j}}f_{i,j}\cdot T_{i,j}^nf_{i,j} \d\mu_{i,j}\\
    &=  P\left(\int_{X_1}f_1\cdot T_1^n f_1\d\mu_1, \int_{X_2}f_2\cdot T_2^n f_2 \d\mu_2\right).
    \end{align*}
    To conclude, recall that if $n\in R_1\cup R_2$, then
    \begin{equation*}
        \min_{i\in\{1,2\}}\left(\int_{X_i}f_i\cdot T_i^n f_i \d\mu_i\right) \leqslant \max_{i\in\{1,2\}}\left(\sup_{m\in R_i}\int_{X_i} f_i\cdot T_i^m f_i \d\mu_i\right)= -\eps,
    \end{equation*}
    and so the properties of $P$ supplied by Lemma \ref{lem2} imply that
    \begin{equation*}
        \int_X f\cdot T^nf \d\mu \leqslant -\delta <0.
    \end{equation*}
    Lemma \ref{lem1} therefore shows that $R_1\cup R_2$ is not a set of nice recurrence, as required.
\end{proof}

\end{document}